\newcommand{\executeiffilenewer}[3]{%
 \ifnum\pdfstrcmp{\pdffilemoddate{#1}}%
 {\pdffilemoddate{#2}}>0%
 {\immediate\write18{#3}}\fi%
}
\newcommand{%
 \executeiffilenewer{.svg}{.pdf}%
 {inkscape -z -D --file=.svg %
 --export-pdf=.pdf --export-latex}%
 \input{.pdf_tex}%
}[1]{%
 \executeiffilenewer{#1.svg}{#1.pdf}%
 {inkscape -z -D --file=#1.svg %
 --export-pdf=#1.pdf --export-latex}%
 \input{#1.pdf_tex}%
}
\newtheorem{lemma}{Lemma}[section]
\newtheorem{proposition}[lemma]{Proposition}
\newtheorem{question}[lemma]{Question}
\theoremstyle{definition}
\newtheorem{remark}[lemma]{Remark}
\newcommand{\R}{\mathbb{R}}
\newcommand{\Z}{\mathbb{Z}}
\newcommand{\C}{\mathbb{C}}
\newcommand{\sldeux}{\mathrm{SL}_2}
\newcommand{\liedeux}{\mathrm{sl}_2}
\newcommand{\hatcal}[1]{\widehat{\mathcal{#1}}}
\DeclareMathOperator{\tr}{Tr}
\DeclareMathOperator{\homo}{Hom}
\DeclareMathOperator{\cs}{CS}
\DeclareMathOperator{\ad}{Ad}
\DeclareMathOperator{\hol}{Hol}
\DeclareMathOperator{\diverg}{div}
\DeclareMathOperator{\bil}{Bil}
\DeclareMathOperator{\lk}{Lk}
\DeclareMathOperator{\id}{Id}
\title{The Kauffman skein module at first order}
\date{}
\author{Julien March\'e}
\address{Institut de mathématiques de Jussieu - Paris Rive Gauche\\
Universit\'{e} Pierre et Marie Curie\\
75252 Paris c\'{e}dex 05\\
France}
\email{julien.marche@imj-prg.fr}
\begin{document}
\maketitle
\begin{abstract}
For a 3-manifold $M$ with boundary, we study the Kauffman module with indeterminate equal to $-1+\epsilon$ where $\epsilon^2=0$. We conjecture an explicit relation between this module and the Reidemeister torsion of $M$ which we prove in particular cases. As a maybe useful tool, we then introduce a notion of twisted self-linking and prove that it satisfies the Kauffman relations at first order. These questions come from considerations on asymptotics of quantum invariants.
\end{abstract}
\section{Introduction}
Given a 3-manifold $M$, a ring $R$ and an invertible element $t$ in $R$, the Kauffman bracket skein module is the free $R$-module generated by isotopy classes of banded links in $M$ modulo local relations depending on $t$, see \cite{hp}. When $R=\C$ and $t=-1$, the disjoint union operation makes it into the algebra of regular functions on the character variety of $\pi_1(M)$ into $\sldeux(\C)$. When $t$ is a root of unity (say of order $2k$), this module can be used to construct the Topological Quantum Field Theory with level $k$ and gauge group SU$_2$. The purpose of this article is to study specifically the case when $R=\C[\epsilon]/(\epsilon^2)$ and $t=-1+\epsilon$: we call it the {\it derived skein module} of $M$. If $M=\Sigma\times [0,1]$, the derived Kauffman module is again an algebra, deforming the one with $\epsilon=0$:  the deformation corresponds to the Goldman Poisson bracket as already observed in \cite{tur2}. 
If $M$ is a 3-manifold with boundary, the derived Kauffman module is a module over the derived Kauffman algebra of the neighborhood of the boundary. We show in this article that this structure is described by a second order differential operator on the character variety of $M$ (more precisely, on its restriction to the character variety of the boundary of $M$). We conjecture that this operator $\Upsilon$ is given by 
\begin{equation}\label{transport1}
\Upsilon(f)=-\frac{1}{2}\frac{L_{X_f}T}{T}
\end{equation}
where $f$ is a regular function on the character variety of $\partial M$ vanishing on the restriction of characters of $M$, $X_f$ is its Hamiltonian vector field and $T$ is the (twisted) Reidemeister torsion. We may think of this operator as a kind of symplectic Laplacian. We prove this formula for a thickened surface and a handlebody. It can be also checked by hand for torus knots and for the figure eight knot, see \cite{laurent,lj2}.

This formula is a manifestation of a well-known conjectural fact: the asymptotics of quantum invariants of a 3-manifold $M$ when the level $k$ tends to infinity is governed at first order by the character variety of $M$, the Chern-Simons functional and the Reidemeister torsion. This fact was already known to E. Witten for the case of closed 3-manifolds: various generalizations of the volume conjecture for knots in $S^3$ predict the same kind of phenomena, see for instance \cite{gm,gl}. In \cite{lj1,lj2}, the Witten conjecture for the expansion of the Witten-Reshetikhin-Turaev invariants of the Dehn fillings of the figure eight knot was obtained by proving another version of the Witten conjecture for the figure eight knot complement. One of the key technical inputs were an explicit differential equation relating the Reidemeister torsion to the $q$-differential equation satisfied by the colored Jones polynomial. This equation was called the transport equation of the Reidemeister torsion, following a standard terminology in semi-classical analysis: it is equivalent to Equation \eqref{transport1} in the case of knot complements. 

The present article generalises this transport equation for any manifold with boundary, using the beautiful relation between skein modules and $q$-differential equations discovered in \cite{fgl}, see \cite{le} for a review. The first section starts with a short motivation from quantum topology, then we state the conjecture and explain its relation with the quantum $A$-polynomial and the colored Jones polynomial.
 In the second section, we prove the formula in the case of thickened surfaces and handlebodies, assuming some familiarity with the symplectic structure on character varieties of surfaces. The last section presents the notion of twisted self-linking: this takes the form of a map $\Phi$ from the derived skein module of $M$ to a space $\mathcal{H}(M,\rho)$ depending on a representation $\rho:\pi_1(M)\to\sldeux(\C)$. For a knot $K$ in $M$, the element $\Phi(K)$ may be thought of as the self-linking of $K$ with coefficients in the adjoint representation $\ad_\rho$ and at the same time as the Hessian at $[\rho]$ of the trace function $f_K$ on the character variety mapping $[\rho]$ to $\tr\rho(K)$. I believe that this notion should yield a proof of Equation \eqref{transport1}. 

{\bf Acknowledgements:} I would like to thank L. Charles, R. Detcherry, S. Garoufalidis, M. Heusener T. Q. T. L\^e and J. Porti for their kind interest relating these questions and both referees for useful suggestions. 

\section{Motivations}

\subsection{Witten expansion conjecture}\label{witten}
Let $M$ be a closed oriented 3-manifold satisfying $H_1(M,\Z)=0$. Suppose moreover that for any non-trivial (and hence irreducible) representation $\rho:\pi_1(M)\to\sldeux(\C)$ we have $H^1(M,\ad_\rho)=0$. This is equivalent to the assumption that the following character variety  is reduced and of dimension 0:
\[X(M)=\homo(\pi_1(M),\sldeux(\C))/\!\!/\sldeux(\C).\]

The Witten-Reshetikhin-Turaev invariant of $M$ with gauge group SU$_2$ and level $k$ is a topological invariant of $M$ denoted by $Z_k(M)$ which conjecturally satisfies the following expansion:

\[ Z_k(M)=\sum_{\rho} e^{ik\cs(M,\rho)+\frac{i\pi}{4}I(M,\rho)}\sqrt{T(M,\rho)}+O(k^{-3/2})\]
In this formula, $\rho$ runs over conjugacy classes of non trivial morphisms from  $\pi_1(M)$ to SU$_2$ and $\cs(M,\rho)\in \R/2\pi\Z$, $I(M,\rho)\in \Z/8\Z$ and $T(M,\rho)\in (0,\infty)$ are topological invariants called respectively the Chern-Simons invariant, the spectral flow and the Reidemeister torsion.

The WRT invariant can be extended to pairs $(M,L)$ where $L$ is a banded link in $M$, that is an oriented submanifold diffeomorphic to a disjoint union of annuli, see for instance \cite{bhmv1}. As an invariant of $L$, it satisfies the celebrated Kauffman relations, that is if $L_{\times}, L_{\infty},L_0$ differ as in  Figure \ref{kauf}, then the invariants should satisfy the following equations where $\zeta_k=-e^{\frac{i\pi}{2k}}$: 
\[Z_k(M,L_\times)=\zeta_kZ_k(M,L_\infty)+\zeta_k^{-1}Z_k(M,L_0).\]

 \begin{figure}[htbp]
\centering
  \def\svgwidth{8cm}
 \executeiffilenewer{kaufman.svg}{kaufman.pdf}%
 {inkscape -z -D --file=kaufman.svg %
 --export-pdf=kaufman.pdf --export-latex}%
 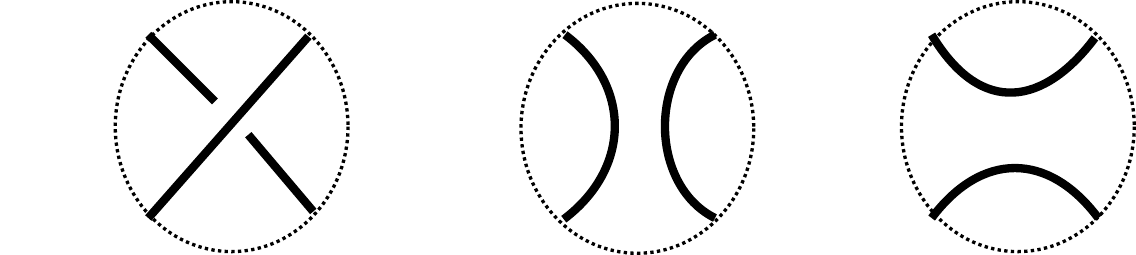%

  \caption{Kauffman triple}
  \label{kauf} 
\end{figure}
Moreover, if $L\cup O$ is the union of $L$ with a trivial banded knot, then one should have $Z_k(M,L\cup O)=(-\zeta_k^2-\zeta_k^{-2})Z_k(M,L)$. This invariant was introduced by Witten using a path integral that we describe below. 

Let $\alpha\in \Omega^1(M,\rm{su}_2)$ be a 1-form on $M$ with values in the Lie algebra su$_2$. We may think of it as a connection on a trivial SU$_2$ principal bundle and introduce the Chern-Simons action of $\alpha$ as the integral 
\[\cs(\alpha)=-\frac{1}{4\pi}\int_M\tr(\alpha\wedge d\alpha+\frac{2}{3}\alpha\wedge[\alpha\wedge\alpha]).\]
 Denoting by $L_i$  the components of $L$ and by $\hol_{L_i}\alpha$ the holonomy of $\alpha$ along $L_i$, one can write the original  path integral:
\[ Z_k(M,L)=\int e^{ik\cs(\alpha)}\prod_i(-\tr \hol_{L_i} \alpha) \mathcal{D}\alpha.\]

The Witten conjecture comes from a formal stationary phase expansion in this path integral, observing that critical points of the Chern-Simons action are flat connections (i.e. satisfy $d\alpha+\frac 1 2 [\alpha\wedge\alpha]=0$ ) and gauge equivalence class of flat connections correspond to conjugacy class of representations in SU$_2$. 

Let $Z_k(M,\rho)=e^{ik\cs(M,\rho)+\frac{i\pi}{4}I(M,\rho)}\sqrt{T(M,\rho)}$ be the term corresponding to $\rho$ in the Witten expansion of $Z_k(M)$. 
If we add the link $L$, the term $Z_k(M,\rho)$ should be at 0-th order multiplied by the value $\chi_\rho(L)=\prod_i (-\tr \rho(L_i))$. Let us denote by $\Phi_\rho(L)$ the first order term, that is such that the following expansion holds:
\[ Z_k(M,L)=\sum_{\rho} Z_k(M,\rho)\left(\chi_\rho(L)+\frac{i\pi}{2k}\Phi_\rho(L)\right)+O(k^{-3/2})\]

Factorizing $Z_k(M,\rho)$, we guess that the Kauffman relations should yield:
\begin{equation}\label{phi} \Phi_\rho(L_\times)+\Phi_\rho(L_\infty)+\Phi_\rho(L_0)=-\chi_\rho(L_\infty)+\chi_\rho(L_0)\text{ and }\Phi_\rho(L\cup O)=-2\Phi_\rho(L).
\end{equation}

One of the goals of this article is to construct rigorously such an invariant. Heuristically, it should be defined as a contraction of the Hessian of the function $\alpha\mapsto \tr \hol_\gamma \alpha$ with the inverse Hessian of the Chern-Simon action. We did not try to make sense of such a contraction although it was the source of inspiration of the more formal construction developed in Section \ref{selflink}. 

\subsection{Differential geometry on the character variety}

Let $R$ be a ring and $t$ be an invertible element in $R$. The Kauffman skein module $\mathcal{S}(M,R,t)$ of a compact connected 3-manifold $M$ (maybe with boundary) is defined as the free $R$-module generated by isotopy classes of banded links in $M$ (including the empty link) modulo the Kauffman relations $[L_\times]-t[L_\infty]-t^{-1}[L_0]$ and $[L\cup O]+(t^2+t^{-2})[L]$. 

If $R=\C$ and $t=-1$, the disjoint union makes $\mathcal{S}(M,\C,-1)$ into an algebra, isomorphic to the algebra of regular functions on the character variety $X(M)$, see \cite{numazu} for a review. We will be concerned in this article with the {\em derived skein module} which consists in setting $R=\C[\epsilon]/(\epsilon^2)$ and $t=-1+\epsilon$. For short, we will denote the first algebra by $\mathcal{S}(M)$ and the derived skein module by $\mathcal{S}'(M)$. 

Denoting by $E\in\operatorname{End}(\mathcal{S}'(M))$ the multiplication by $\epsilon$ we have $\mathcal{S}(M)=\mathcal{S}'(M)/\operatorname{Im}(E)$. As $E$ satisfies $E^2=0$, we have a complex $0\to \mathcal{S}'(M)/\operatorname{Im}(E)\overset{E}{\to}\mathcal{S}'(M)\to\mathcal{S}(M)\to 0$ which may fail to be exact only at the first entry. We will say that $M$ is {\it regular} if the map $E:\mathcal{S}(M)\to \mathcal{S}'(M)$ is injective. 


If $M$ is diffeomorphic to a thickened surface of the form $\Sigma\times[0,1]$, then stacking a banded link over the other makes $\mathcal{S}(M,R,t)$ into an algebra, commutative if $t^2=1$. Moreover, projecting a link on $\Sigma$ and recursively solving the crossings show that the {\em multicurves} of $\Sigma$ - that is embedded 1-manifold without components bounding a disc - form a basis of $\mathcal{S}(M,R,t)$. 
This gives in particular a natural isomorphism $\mathcal{S}'(M)\simeq \mathcal{S}(M)\otimes \C[\epsilon]/(\epsilon^2)$, showing that $M$ is regular in that case. However, one should be careful that the preceding isomorphism is not an isomorphism of algebras as one has for $f+\epsilon f'$ and $g+\epsilon g'$ in $\mathcal{S}(M)$ the following identity: 
\begin{equation}\label{multiplication}
(f+\epsilon f')\cdot (g+\epsilon g')=fg+\epsilon(fg'+f'g+\frac{1}{2}\{f,g\})
\end{equation}
where $\{f,g\}$ is the Goldman Poisson bracket, see \cite{bfk}.

This formula shows that the derived skein module of a thickened surface $\Sigma\times [0,1]$ reflects the Poisson structure of the character variety $X(\Sigma)$, so that one is naturally leaded to ask whether the derived skein module of a manifold $M$ with boundary $\Sigma$ contains some geometric structure related to the character variety $X(M)$. Having very little structure, the derived skein module $\mathcal{S}'(M)$ is by itself no much information, however, we will use the relation between the derived skein module of $M$ and the one of a neighborhood of its boundary.

Let $M$ be a 3-manifold with boundary $\Sigma$ and denote by $BM$ a tubular neighborhood of $\Sigma$, hence homeomorphic to a thickened surface. 
The inclusion map $i:BM\to M$ induces a map $i_*: \mathcal{S}(BM)\to \mathcal{S}(M)$. Its kernel is an ideal $\mathcal{P}$ corresponding to a closed submanifold $Y\subset X(\Sigma)$. Standard arguments of algebraic geometry show that $Y$ is the Zariski closure of $r(X(M))\subset X(\Sigma)$ where $r$ is the restriction map induced by $i_*$. For instance if $M$ has toric boundary, $\mathcal{P}$ is generated by the celebrated $A$-polynomial. Moreover, Poincar\'e duality implies that $Y$ is a Lagrangian submanifold of $X(\Sigma)$. In algebraic terms, it can be simply written $\{\mathcal{P},\mathcal{P}\}\subset \mathcal{P}$, we will say in that case that $\mathcal{P}$ is a {\em Lagrangian ideal}.

Similarly, define $\mathcal{P}'$ to be the kernel of the map $i_*:\mathcal{S}'(BM)\to \mathcal{S}'(M)$. This fits into the following diagram where rows are exact.
\[\xymatrix{
0\ar[r] & \mathcal{P}\ar[r]\ar[d] & \mathcal{S}(BM)\ar[r]\ar[d] & \mathcal{S}(M)\ar[d] \\
0\ar[r] & \mathcal{P}'\ar[r]\ar[d] & \mathcal{S}'(BM)\ar[r]\ar[d] & \mathcal{S}'(M)\ar[d] \\
0\ar[r] & \mathcal{P}\ar[r] & \mathcal{S}(BM)\ar[r] & \mathcal{S}(M)}\]

Let us show in the following lemma that regularity assumptions (exactness) on the derived skein modules imply that $Y$ is Lagrangian, giving a purely skein theoretic ``proof" of this important fact. 
\begin{lemma}
If the complex $0\to \mathcal{P}\to \mathcal{P}'\to \mathcal{P}\to 0$ is exact (for short, if $\mathcal{P}$ is regular) then $\mathcal{P}$ is a Lagrangian ideal.
\end{lemma}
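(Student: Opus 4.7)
The plan is to lift elements of $\mathcal{P}$ into $\mathcal{P}'$ and then extract the Goldman bracket as the $\epsilon$-coefficient of a commutator. Given $f,g\in \mathcal{P}$, the surjectivity of the mod-$\epsilon$ map $\mathcal{P}'\to \mathcal{P}$, which is part of the exactness hypothesis, supplies lifts $\tilde f,\tilde g\in \mathcal{P}'$ with $\tilde f\equiv f$ and $\tilde g\equiv g$ modulo $\epsilon$. Neither lift is canonical, but the noncanonical parts will drop out.

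Next I would observe that $\mathcal{P}'$ is a two-sided ideal of $\mathcal{S}'(BM)$: stacking in a collar of $\partial M$ makes $\mathcal{S}'(M)$ into an $\mathcal{S}'(BM)$-bimodule for which $i_\ast$ is equivariant, and its kernel $\mathcal{P}'$ therefore absorbs multiplication on both sides. In particular $[\tilde f,\tilde g]\in \mathcal{P}'$. Writing $\tilde f=f+\epsilon f_1$ and $\tilde g=g+\epsilon g_1$ and applying the multiplication rule \eqref{multiplication}, the contributions of $f_1$ and $g_1$ cancel by commutativity of $\mathcal{S}(BM)$, and the antisymmetry of the Goldman bracket gives the clean identity
\[
[\tilde f,\tilde g]=\epsilon\,\{f,g\}.
\]

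To finish, I would invoke exactness in the middle of the sequence $0\to \mathcal{P}\to \mathcal{P}'\to \mathcal{P}\to 0$, which identifies $\mathcal{P}'\cap \epsilon\mathcal{S}(BM)$ with the image $\epsilon\mathcal{P}$ of the leftmost injection. From the display above, $\epsilon\{f,g\}$ lies in $\mathcal{P}'\cap \epsilon\mathcal{S}(BM)$, so $\epsilon\{f,g\}=\epsilon h$ for some $h\in \mathcal{P}$. Since $BM$ is a thickened surface, the multicurve basis gives $\mathcal{S}'(BM)\simeq \mathcal{S}(BM)\otimes\C[\epsilon]/(\epsilon^2)$, so multiplication by $\epsilon$ is injective on $\mathcal{S}(BM)$; cancelling $\epsilon$ yields $\{f,g\}=h\in \mathcal{P}$, as required.

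The step requiring the most care is the ideal property of $\mathcal{P}'$, since the entire argument rests on interpreting $[\tilde f,\tilde g]$ as an element of $\mathcal{P}'$ rather than merely of $\mathcal{S}'(BM)$. Once that structural point is pinned down, the Lagrangian condition emerges essentially for free from formula \eqref{multiplication}: the first-order deformation $\mathcal{S}'(BM)$ of $\mathcal{S}(BM)$ automatically promotes any ideal $\mathcal{P}$ that deforms to a $\mathcal{P}'$ into a Poisson ideal, and the regularity hypothesis is exactly what guarantees the existence of such a deformation.
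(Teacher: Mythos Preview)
Your proof is correct and follows essentially the same route as the paper's: lift $f,g$ to $\mathcal{P}'$ using surjectivity, compute the commutator as $\epsilon\{f,g\}$ via \eqref{multiplication}, and use exactness at $\mathcal{P}'$ to conclude $\{f,g\}\in\mathcal{P}$. You supply more detail than the paper does---in particular the justification that $\mathcal{P}'$ is a two-sided ideal and the explicit cancellation of $\epsilon$---but the argument is the same.
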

\begin{proof}
Let $f,g$ be elements of $\mathcal{P}$, by exactness, there exists $f',g'\in S(BM)$ such that $f+\epsilon f', g+\epsilon g'\in \mathcal{P}'$. As $\mathcal{P}'$ is an ideal, the commutator $[f+\epsilon f',g+\epsilon g']=\epsilon\{f,g\}$ belongs to $\mathcal{P}'$. Being a multiple of $\epsilon$, by the exactness assumption again, we derive $\{f,g\}\in \mathcal{P}$. 
\end{proof}

\begin{lemma}
Suppose that $\mathcal{P}$ is regular and define a map $\Upsilon: \mathcal{P}\to S(BM)/\mathcal{P}$ in the following way: 
for $f\in \mathcal{P}$, choose $f'\in S(BM)$ such that $f+\epsilon f'\in \mathcal{P}'$ and set $\Upsilon(f)=f'$. 
This operator is well-defined and satisfies for any $f\in\mathcal{P}$ and $g\in S(BM)$ the equality
\[ \Upsilon(gf)= g\Upsilon(f)+\frac{1}{2}\{g,f\}.\]
\end{lemma}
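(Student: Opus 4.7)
The proof is a diagram chase in the derived-skein short exact sequence for $\mathcal{P}$, powered by the commutator formula~\eqref{multiplication} in the derived algebra of the boundary collar.

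\emph{Well-definedness.}  For $f\in\mathcal{P}$, regularity says that reduction mod~$\epsilon$ maps $\mathcal{P}'$ onto $\mathcal{P}$, so at least one lift $f+\epsilon f'\in\mathcal{P}'$ exists.  If $f+\epsilon f_1'$ and $f+\epsilon f_2'$ are two such lifts, then $\epsilon(f_1'-f_2')\in\mathcal{P}'$ reduces to $0\in\mathcal{P}$, and the exact sequence $0\to\mathcal{P}\xrightarrow{\times\epsilon}\mathcal{P}'\to\mathcal{P}\to 0$ produces $c\in\mathcal{P}$ with $\epsilon(f_1'-f_2')=\epsilon c$ in $\mathcal{S}'(BM)$.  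Now $BM\simeq\Sigma\times[0,1]$ is itself regular---by the multicurve basis one has $\mathcal{S}'(BM)\cong\mathcal{S}(BM)\otimes\C[\epsilon]/(\epsilon^2)$ as $\C$-vector spaces---so multiplication by $\epsilon$ is injective on $\mathcal{S}(BM)$, forcing $f_1'-f_2'=c\in\mathcal{P}$.  Therefore $\Upsilon(f):=[f']\in\mathcal{S}(BM)/\mathcal{P}$ is independent of the chosen lift.

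\emph{The Leibniz-type identity.}  Fix $g\in\mathcal{S}(BM)$ and $f\in\mathcal{P}$.  Choose any lift $f+\epsilon f'\in\mathcal{P}'$ and regard $g=g+\epsilon\cdot 0$ inside $\mathcal{S}'(BM)$ via the canonical splitting.  Because $\mathcal{P}'$ is a two-sided ideal, the product $g\cdot(f+\epsilon f')$ still lies in $\mathcal{P}'$.  The multiplication rule~\eqref{multiplication} (with the second factor having zero $\epsilon$-part) evaluates this product as
\[
g\cdot(f+\epsilon f')\;=\;gf+\epsilon\Bigl(gf'+\tfrac12\{g,f\}\Bigr),
\]
so the definition of $\Upsilon$ applied to its mod-$\epsilon$ reduction $gf\in\mathcal{P}$ yields
\[
\Upsilon(gf)\;\equiv\;gf'+\tfrac12\{g,f\}\;=\;g\,\Upsilon(f)+\tfrac12\{g,f\}\pmod{\mathcal{P}}.
\]
The right-hand side is meaningful because $\mathcal{P}$ is an ideal of the commutative algebra $\mathcal{S}(BM)$ (kernel of the $\mathcal{S}(BM)$-linear map $i_*$), so multiplication by $g$ descends to $\mathcal{S}(BM)/\mathcal{P}$.

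No substantive obstacle is expected; the argument is essentially bookkeeping inside the diagram already displayed before the statement.  The one point worth tracking carefully is where each regularity hypothesis is used: regularity of $\mathcal{P}$ supplies the existence of lifts and the kernel description in the uniqueness step, while the unconditional regularity of the thickened-surface algebra $\mathcal{S}'(BM)$ is what lets one cancel the factor of $\epsilon$ to conclude $f_1'-f_2'\in\mathcal{P}$.
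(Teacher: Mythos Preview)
Your proof is correct and follows the same approach as the paper: use that $\mathcal{P}'$ is an ideal so that $g\cdot(f+\epsilon f')\in\mathcal{P}'$, then read off $\Upsilon(gf)$ from the multiplication rule~\eqref{multiplication}. You are more explicit than the paper on well-definedness (the paper leaves this implicit), and you specialize to $g'=0$ where the paper keeps a general $g+\epsilon g'$ and then discards the term $g'f\in\mathcal{P}$; these are equivalent.
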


\begin{proof}
This follows immediately from the fact that for any $g+\epsilon g'\in S'(BM)$ and $f+\epsilon f'\in\mathcal{P}'$ one has $(g+\epsilon g')(f+\epsilon f')\in \mathcal{P}'$ and then $\Upsilon(gf)=g'f+gf'+\frac{1}{2}\{g,f\}=g\Upsilon(f)+\frac{1}{2}\{g,f\}\mod \mathcal{P}$. 
\end{proof}

We would like to understand this operator in terms of the differential geometry of $X(M)$ and $X(\Sigma)$. Let $X_f$ be the Hamiltonian vector field on $Y$ associated to $f\in \mathcal{P}$. Formally, $X_f$ is the derivation of $S(BM)/\mathcal{P}$ defined by $X_f(g)=\{f,g\}$. Let $T$ be the Reidemeister torsion of $M$ viewed as a rational volume form on $X(M)$ (see \cite{porti} or \cite{numazu} Section 4 for formal a construction of this form). 

\begin{question}\label{conjtransport}
Show that for any $f\in \mathcal{P}$, we have the following equality: 
\begin{equation}\label{transport}-2\Upsilon(f)=\frac{L_{X_f} T}{T}=\diverg_T(X_f).\end{equation} 
In words, elements of the ideal $\mathcal{P}'$ are obtained by substracting to $f\in\mathcal{P}$ twice the divergence of the Hamiltonian vector field of $f$ with respect to the Reidemeister torsion.
\end{question}
Let us rewrite this equation in local coordinates for concreteness: one can suppose that $(x_1,\ldots, y_n)$ are local Darboux coordinates on $X(\Sigma)$ (i.e. $\omega=\sum_i dx_i\wedge d y_i$) such that $Y$ is given by $y_1=\ldots=y_n=0$. 
Then, the Reidemeister torsion has the form $T=a(x_1,\ldots,x_n)dx_1\wedge\cdots\wedge dx_n$ and one computes for $f$ vanishing on $Y$:
\[\frac{L_{X_f}T}{T}=\sum_i \frac{\partial^2f}{\partial x_i \partial y_i}+ \frac{\{f,a\}}{a}\]


We compute that, modulo $\mathcal{P}$ one has $\diverg_T(X_{fg})=\diverg_T(gX_f)=g\diverg_T(X_f)+X_f(g)=g\diverg_T(X_f)+\{f,g\}$. Hence the difference $D(f)=\Upsilon(f)+\frac{1}{2}L_{X_f}T/T$ is a derivation of $\mathcal{S}(BM)/\mathcal{P}$ - or vector field on $Y$ - that vanishes conjecturally.

\subsection{The quantum $A$-polynomial at first order}

Let $R$ be the ring $\C[t,t^{-1}]$. We set $\widehat{\mathcal{S}}(M)=\mathcal{S}(M,R,t)$. If $N$ is a 3-manifold and $BN$ is a tubular neighborhood of its boundary, we will denote by $\hatcal{P}$ the kernel of the natural map $\hatcal{S}(BN)\to \hatcal{S}(N)$. In the case when $N$ is the complement of a knot in $S^3$, this ideal has an important interpretation in terms of the colored polynomials of $K$ that we recall quickly below.

Indeed in that case, $\hatcal{S}(BN)$ is the sub-algebra of the quantum torus 
\[\mathcal{T}=\Z[t^{\pm 1}]\langle L^{\pm 1},M^{\pm 1}\rangle/(LM-t^2ML)\]
 invariant by the involution $\sigma(L^aM^b)=L^{-a}M^{-b}$. The meridian of the knot corresponds to $-M-M^{-1}$ whereas the longitude corresponds to $-L-L^{-1}$. 
 
 The element $L$ acts on a sequence $f:\Z\to R$ by the formula $(Lf)_n=f_{n+1}$ while $M$ acts by $(Mf)_n=t^{2n}f_n$. An easy computation shows that elements in
$\hatcal{P}$ annihilate the sequence $(J^K_n)_{n\in\Z}$ of colored Jones polynomials. 
Moreover, putting $t=-1$, an element of $\hatcal{P}$ gives an element of $\mathcal{P}$, a simple fact which fully motivates the celebrated AJ-conjecture, see \cite{le}.

Before going further, define $\sigma_0,\sigma_1:\mathcal{T}\to\C[L^{\pm 1},M^{\pm 1}]$ two maps satisfying the following relations:
\begin{align*}
\sigma_0(P(t,L,M))=P(-1,L,M), \quad \sigma_1(M)=\sigma_1(L)=0,\\
\sigma_1(PQ)=\sigma_0(P)\sigma_1(Q)+\sigma_1(P)\sigma_0(Q)+\frac{1}{2}\{\sigma_0(P),\sigma_0(Q)\}.
\end{align*}
where the Poisson bracket in the torus is given by the symplectic form $\omega=\frac{dL\wedge dM}{LM}$. 
We extend the map $\sigma_0+\epsilon \sigma_1:\mathcal{T}\to \C[L^{\pm 1},M^{\pm 1}][\epsilon]/(\epsilon^2)$ by linearity sending $t$ to $-1-\epsilon$.  
These equations are reminiscent from Equation \eqref{multiplication} and immediately imply that if $f+\epsilon f'$ is the reduction modulo $\epsilon^2$ of a polynomial $P\in \hatcal{S}(BM)\subset\mathcal{T}$, then $\sigma_0(P)=f$ and $\sigma_1(P)=f'$. By analogy with the theory of Toeplitz operators, let us call them principal and subprincipal symbols. 

\subsubsection{The abelian case}

Consider again the case where $N$ is the complement of a knot in $S^3$. Then the character variety $X(N)$ contains a component of abelian characters indexed by $\C^*$ where $z\in\C^*$ corresponds to a representation $\rho_z$ mapping the meridian to a diagonal matrix with entries $e^z,e^{-z}$. In that case, the Reidemeister torsion form is $\frac{e^z-e^{-z}}{\Delta_N(e^{2z})}dz$ where $\Delta_N$ is the Alexander polynomial of $N$. 

It was shown in \cite{gl} as a strengthening of \cite{glvieux} that one has $$\lim\limits_{n\to \infty}J_n^K(-e^{z/2n})=\frac{e^z-e^{-z}}{\Delta(e^{2z})}$$
 uniformly for $z$ in a neighbourhood of $0$. Be careful that we use the normalisation of the Jones polynomial which gives for the trivial knot $J^O_n=t^{2n}-t^{-2n}$. 
 
 If $f$ is holomorphic in a neighbourhood of $0$ and $f_n\in \C[z]$ is a sequence satisfying $f_n(-e^{z/2n})= f(z)+o(\frac{1}{n})$ then a simple computation shows that 
\begin{enumerate}
\item $(t f)_n(-e^{z/2n})= -f(z)-\frac{z}{2n}f(z)+o(\frac{1}{n})$
\item $(Mf)_n(-e^{z/2n})= e^{z}f(z)+o(\frac{1}{n})$
\item $(Lf)_n(-e^{z/2n})= f(z)+\frac{z}{n}f'(z)+o(\frac{1}{n})$
\end{enumerate}
These formulas may be written for $P=M$ or $P=L$ in the following way:
$$(Pf)_n(-e^{z/2n})= \sigma_0(P)(e^{z},1)f(z)+\frac{z}{2n}\left(\sigma_1(P)-2X_{\sigma_0(P)}(e^{z},1)\right)f(z)+o(\frac{1}{n})$$
Being compatible with composition, this formula holds for any $P\in \mathcal{T}$. We get that if $P\in\mathcal{T}$ satisfies $P J^K=0$ then 
$\sigma_0(P)(e^{z},1)=0$ and $(\sigma_1(P)(e^z,1)-2X_{\sigma_0(P)}(e^z,1))(\frac{e^z-e^{-z}}{\Delta(e^{2z})})=0$.

Reinterpreting $\frac{e^z-e^{-z}}{\Delta(e^{2z})}$ as the Reidemeister torsion, this proves that the formula \eqref{conjtransport} holds in that case where the submanifold $Y$ is defined by the equation $L=1$.

\subsubsection{The figure eight knot}
In \cite{lj1,lj2}, the colored Jones polynomials were packaged into a sequence of holomorphic sections over the standard torus $T^2$,  $\Psi_k\in H^0(T^2,L^k\otimes \delta)$ where $L$ is a pre-quantum bundle and $\delta$ a half-form bundle. This sequence were called the {\em knot state} and were the main ingredient for the proof of the Witten expansion conjecture for the manifolds obtained by Dehn fillings along the figure-eight knot. In that case, it was shown in \cite{lj2} that the state $\Psi_k$ is a Lagrangian state, concentrating on $Y$, with phase the Chern-Simons action and amplitude the Reidemeister torsion. It followed from an interpretation of the equation $P(t,L,M) J^K=0$ as a Toeplitz operator annihilating the knot state $\Psi_k$. The Reidemeister torsion appeared through a differential equation involving the symbol $\sigma_0(T)$ and the subprincipal symbol $\sigma_1(T)$ of the Toeplitz operator $T$. 


It happens that these symbols are (up to a factor of $i$) the same as the ones defined above. Hence, substituting $f$ and $f'$ with $\sigma_0$ and $\sigma_1$ shows immediately that the transport equation of Proposition 5.3 in \cite{lj2} is equivalent to Equation \eqref{transport}. 

\section{Examples}
\begin{proposition}
Equation \eqref{transport} holds in the case where $M=\Sigma\times[0,1]$ where $\Sigma$ is a closed surface.
\end{proposition}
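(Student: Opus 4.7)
The plan exploits the very explicit structure of $M=\Sigma\times[0,1]$: the boundary is $\partial M=\Sigma\sqcup\bar\Sigma$, so $BM$ is a disjoint union of two thickened surfaces, giving $\mathcal{S}(BM)\cong\mathcal{S}(\Sigma)\otimes\mathcal{S}(\Sigma)$, and the map $i_*\colon\mathcal{S}(BM)\to\mathcal{S}(M)=\mathcal{S}(\Sigma)$ is just the multiplication $f\otimes g\mapsto fg$. Its kernel $\mathcal{P}$ is the augmentation ideal cutting out the diagonal $Y=\Delta\subset X(\Sigma)\times X(\Sigma)$ (equipped with symplectic form $\omega_1-\omega_2$), and is generated as an ideal by the elements $h_f:=p_1^*f-p_2^*f$ for $f\in\mathcal{S}(\Sigma)$. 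A short computation using the product Poisson structure on $X(\partial M)$ yields the Leibniz-type identity $\{g,h_f\}|_\Delta=\{g|_\Delta,f\}_\Sigma$, which will be used to reconcile the two sides.

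The first technical step is to identify the torsion. For $M=\Sigma\times[0,1]$, the adjoint-twisted cohomology of $M$ reduces to that of $\Sigma$, and a direct computation via Poincar\'e--Lefschetz duality combined with the standard identification of the cup-product pairing on $H^1(\Sigma;\ad_\rho)$ with the Goldman form identifies $T$, viewed as a volume form on $\Delta\cong X(\Sigma)$, with (a fixed multiple of) the Liouville form $\omega^n/n!$ of the Goldman symplectic structure on $X(\Sigma)$.

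Once $T$ is identified, the transport equation can be verified on generators: $\Upsilon(h_f)=0$ modulo $\mathcal{P}$ because $h_f$ already lies in $\mathcal{P}'$ (the banded links $p_1^*f$ and $p_2^*f$ are isotopic in $M$, so their difference vanishes in $\mathcal{S}'(M)$); dually, $X_{h_f}$ is tangent to $\Delta$ and restricts to the Hamiltonian $X_f$ on $X(\Sigma)\cong\Delta$, which preserves the Liouville form, so $L_{X_{h_f}}T=0$. Extension to all of $\mathcal{P}$ is then achieved by comparing derivation laws: the lemma in Section 1.2 gives $\Upsilon(gh_f)\equiv\tfrac{1}{2}\{g,h_f\}\pmod{\mathcal{P}}$; meanwhile, the Leibniz rule for divergence together with $X_{gh_f}|_\Delta=g|_\Delta X_f$ yields $\diverg_T(X_{gh_f})|_\Delta=X_f(g|_\Delta)$, and the Leibniz identity from the first paragraph reconciles the two expressions. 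The main obstacle I expect is the torsion identification, including the correct bookkeeping of the multiplicative constants relating the Kauffman deformation $t=-1+\epsilon$ to the Goldman bracket.
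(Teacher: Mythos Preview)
Your proposal is correct and follows essentially the same line as the paper's proof. Both arguments identify $\mathcal{S}(BM)\cong\mathcal{S}(\Sigma)\otimes\mathcal{S}(\Sigma)$, recognize that $\mathcal{P}'$ is generated by the differences $h_f=p_1^*f-p_2^*f$ (with no $\epsilon$-term, so $\Upsilon(h_f)=0$), identify the Reidemeister torsion with the Liouville form on $X(\Sigma)$, and conclude by observing that $X_{h_f}|_\Delta=X_f$ is Hamiltonian and therefore has vanishing divergence with respect to the Liouville form.

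The only difference is one of explicitness. The paper stops after checking the generators $h_f$, implicitly relying on the earlier remark that $\Upsilon$ and $f\mapsto\diverg_T(X_f)$ are second-order operators with the same symbol, so that agreement on ideal generators propagates. You spell this propagation out via the derivation law $\Upsilon(gh_f)=g\Upsilon(h_f)+\tfrac12\{g,h_f\}$ together with $\diverg_T(g|_\Delta X_f)=X_f(g|_\Delta)$ and the identity $\{g,h_f\}|_\Delta=\{g|_\Delta,f\}_\Sigma$. You also flag the torsion identification $T=\omega^n/n!$ as something requiring justification, whereas the paper simply asserts it; this is a fair point, since the statement depends on the normalization conventions for Reidemeister torsion and for the Goldman bracket appearing in Equation~\eqref{multiplication}, and one should check that these are compatible with the constant $-2$ in Equation~\eqref{transport}. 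None of this changes the substance of the argument.
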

In that case, $BM$ is homeomorphic to $M\amalg M$ hence $\mathcal{S}'(BM)=\mathcal{S}'(M)\otimes \mathcal{S}'(M)$ and the ideal $\mathcal{P}'$ is clearly generated by elements of the form $\gamma\otimes 1-1\otimes\gamma$ for any essential simple curve $\gamma\subset \Sigma$. The restriction map is simply the diagonal embedding $X(M)=X(\Sigma)\to X(\Sigma)^2$ and the corresponding element of $\mathcal{P}'$ reads $F=f\circ p_1-f\circ p_2$ where $p_1$ and $p_2$ are the two projections and $f$ is the trace function corresponding to $\gamma$ (notice that there are no terms proportional to $\epsilon$). The Reidemeister torsion on $M$ is the Liouville form $\mu$ associated to the symplectic form on $X(\Sigma)$, and the Hamiltonian vector field associated to $F$ restricts to the diagonal as $X_f$. Hence the transport equation amounts to $L_{X_f}\mu=0$ which is obvious from the fact that any Hamiltonian flow preserves the symplectic and Liouville forms. 

\begin{proposition}\label{handle}
Equation \eqref{transport} holds in the case where $M$ is a handlebody.
\end{proposition}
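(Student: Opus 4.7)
The plan is to use compression disks to produce generators of $\mathcal{P}$, verify vanishing of both sides of \eqref{transport} on those generators, and extend by the Leibniz structure on both sides.

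Choose a meridian disk system $D_1,\dots,D_g\subset M$ with boundary curves $c_1,\dots,c_g\subset\Sigma$. These curves normally generate the kernel of $\pi_1(\Sigma)\twoheadrightarrow\pi_1(M)=F_g$, and the Lagrangian $Y=r(X(M))\subset X(\Sigma)$ is the image of $X(F_g)$. The ideal $\mathcal{P}$ contains the elements $\gamma+2$ for every simple closed curve $\gamma\subset\Sigma$ nullhomotopic in $M$ (such $\gamma$ become unknots in $M$) as well as the differences $\gamma-\delta$ for every pair $\gamma,\delta\subset\Sigma$ isotopic in $M$; these together generate $\mathcal{P}$.

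For the left-hand side, the identity $t^2+t^{-2}\equiv 2\pmod{\epsilon^2}$ at $t=-1+\epsilon$ shows that an unknot in $\mathcal{S}'(M)$ still evaluates to $-2\cdot\emptyset$ and that isotopies in $M$ are unchanged at first order in $\epsilon$. Hence both types of generators above lift to $\mathcal{P}'$ with no $\epsilon$-correction, so $\Upsilon=0$ on them; the Leibniz rule then controls $\Upsilon$ on all of $\mathcal{P}$.

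On the right-hand side, the Hamiltonian vector field $X_\gamma$ restricted to $Y$ is Goldman's twist flow along $\gamma$: when $\rho(\gamma)=I$, this flow acts on $Y\simeq X(F_g)$ by infinitesimal simultaneous $\sldeux$-conjugation of the subset of free generators of $\pi_1(M)$ lying on one side of $\gamma$. Writing $T$ as a volume form on $X(F_g)$ via Fox calculus on the free presentation $\pi_1(M)=\langle a_1,\dots,a_g\rangle$, one sees that $T$ depends only on the conjugacy class of $\rho$ and is therefore invariant under partial $\sldeux$-conjugation, which gives $L_{X_\gamma}T=0$ on $Y$. A parallel argument handles the generators of type $\gamma-\delta$, since isotopies in $M$ yield tangentially compatible twist flows. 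Thus \eqref{transport} holds on the generators and extends to the full ideal. The main obstacle is the Fox-calculus normalization of $T$: the geometric invariance under partial conjugation is clear, but pinning down its precise numerical factor to match \eqref{transport} against the conventions for Goldman's Poisson bracket and for the first-order product \eqref{multiplication} is the technical heart of the argument.
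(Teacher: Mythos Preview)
Your approach has a real gap in the treatment of the generators $\gamma-\delta$. You correctly observe that if $\gamma$ and $\delta$ are simple curves on $\Sigma$ isotopic in $M$, then $[\gamma]-[\delta]\in\mathcal{S}'(BM)$ lies in $\mathcal{P}'$ with no $\epsilon$-term, so $\Upsilon(\gamma-\delta)=0$. But the matching claim on the right-hand side, $\diverg_T(X_{f_\gamma-f_\delta})=0$, is not justified by the phrase ``tangentially compatible twist flows''. Although $\gamma$ and $\delta$ are isotopic in $M$, their Goldman twist flows $X_{f_\gamma}$ and $X_{f_\delta}$ on $X(\Sigma)$ are genuinely different vector fields (they are built from different curves on $\Sigma$), and on $Y$ the difference $X_{f_\gamma}-X_{f_\delta}$ is nonzero in general; computing its divergence with respect to $T$ is exactly the nontrivial step. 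The paper does not try to dodge this: it takes generators $[\gamma\#\gamma_i]-[\gamma]$, expands the $\epsilon$-term $f'$ as a sum over the crossings of $\gamma$ with the meridian $\gamma_i$, identifies $X_f|_Y$ with the flow $\rho_t(e)=\exp(t\rho(\gamma)_0)$ inserted along $\gamma_i$, and then computes $\diverg_\mu(X_f)$ as the trace of the derivative of $A_1\mapsto w(A_1,\ldots,A_g)_0$, obtaining the same sum over occurrences of $a_1$ in $w$. That explicit matching is the content of the proposition; your outline does not avoid it but hides it behind an undefined phrase. (Your description of the flow for nullhomotopic $\gamma$ is also off: when $\rho(\gamma)=I$ one has $\rho(\gamma)_0=0$, so the Goldman flow is identically zero on $Y$, not a partial conjugation; the conclusion $\diverg=0$ then holds for the trivial reason that the vector field vanishes.)

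A second issue is the ``Leibniz extension''. Both $\Upsilon$ and $f\mapsto\diverg_T(X_f)$ satisfy relations of the form $\mathcal{O}(gf)=g\,\mathcal{O}(f)+c\,\{g,f\}$ on $Y$, but with a priori different constants $c$; checking that these constants stand in the ratio dictated by \eqref{transport} is exactly the ``precise numerical factor'' you flag at the end as the technical heart --- and then do not verify. Without it, vanishing of both sides on a generating set does not propagate to the ideal it generates. In short, the two places where you defer the work (the divergence computation for isotopic curves, and the symbol matching) are precisely where the paper's proof does the actual calculation.
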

\begin{proof}
Let $(\gamma_i)_{i\in I}$ be the collection of all essential curves in $\Sigma$ bounding discs in $M$. Then the ideal $\mathcal{P}'$ is generated by elements of the form $[\gamma\#\gamma_i]-[\gamma]$ where $\gamma$ is a multicurve on $\Sigma$ and $\#$ denotes the connected sum. Expand $[\gamma\#\gamma_i]-[\gamma]=f+\epsilon f'$ so that we need to prove the equation $f'=-2 L_{X_f}T/T$. 

 \begin{figure}[htbp]
\centering
  \def\svgwidth{8cm}
 \executeiffilenewer{handlebody.svg}{handlebody.pdf}%
 {inkscape -z -D --file=handlebody.svg %
 --export-pdf=handlebody.pdf --export-latex}%
 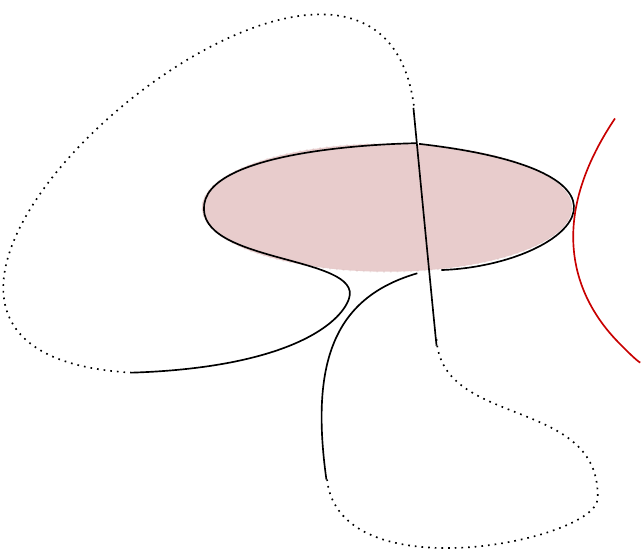%

  \caption{Connected sum}
  \label{handlebody} 
\end{figure}
Let us first compute in detail $f'$. Let $\rho:\pi_1(M)\to \sldeux(\C)$ be a representation, and consider the evaluation of $f'$ at $[\rho]$: it is a sum over all crossings between $\gamma$ and $\gamma_i$ as pictured in Figure \ref{handlebody}. We denote by $\alpha$ the part of $\gamma$ from the base point to the intersection point, and by $\beta$ the remaining part and put $A=\rho(\alpha)$ and $B=\rho(\beta)$ so that $\chi_\rho(\gamma)=-\tr(BA)$. The contribution to $f'$ of the intersection point in consideration is obtained by taking the difference of the two resolutions of the blue crossing, that is $\tr A \tr B +\tr B^{-1}A=\tr AB +2\tr AB^{-1}$. Notice that we would have gotten the opposite if $\gamma$ had crossed $\gamma_i$ in the opposite way. 

Let us now compute the divergence of the vector field $X_f$. Suppose that $\rho$ is irreducible. It is well-known (see for instance \cite{goldman}) that the tangent space at $[\rho]$ of the character variety $X(\Sigma)$ is the space $H^1(\Sigma,\ad_\rho)$. Let $f_\gamma: X(\Sigma)\to \C$ be the trace function defined by $f_\gamma([\rho])=\tr \rho(\gamma)$. Its derivative at $[\rho]$ is given by the twisted 1-cycle $\gamma\otimes \rho(\gamma)_0$ where for any $A\in M_2(\C)$ we have set $A_0=A-\frac{\tr(A)}{2}\rm{Id}$. From the expression $f=-f_{\gamma\#\gamma_i}+f_{\gamma}$, we have $df([\rho])=-\gamma\#\gamma_i\otimes \rho(\gamma\#\gamma_i)_0+\gamma\otimes \rho(\gamma)_0$. If $\rho$ extends to $\pi_1(M)$, then $\rho(\gamma_i)=\rm{Id}$. Decomposing the cycle $\gamma\#\gamma_i$, we get finally $df([\rho])=-\gamma_i\otimes\rho(\gamma)_0$. 

The vector field $X_f$ corresponding to this differential form on $X(M)$ can be interpreted similarly to the Goldman twist flows as follows. Pick a cellular decomposition of $\Sigma$ containing $\gamma_i$ and a 1-cocyle on this decomposition representing $\rho$. Then thicken $\gamma_i$ and insert a small transversal edge denoted by $e$. We define a 1-cocycle $\rho_t$ by taking the same cocycle than $\rho$ but setting $\rho_t(e)=\exp(t\rho(\gamma)_0)$. Then the derivative $X_f=\frac{d}{dt}\big | _{t=0}\,\rho_t$ is the symplectic dual to $\gamma_i\otimes \rho(\gamma)_0$. 

Recall that $\pi_1(H)$ is a free group, say of rank $g$, so that its character variety is $X(H)=\sldeux(\C)^g/\!\!/\sldeux(\C)$. The Reidemeister torsion on $X(H)$ is the push-forward of the  Haar measure $\mu$ on $\sldeux(\C)^g$ with a normalization which does not matter in this proof.

Let $G$ be a Lie group with Lie algebra $\mathcal{G}$ and left-invariant measure $\mu$. In the left trivialization, a vector field $X$ on $G$ may be viewed as a map $F:G\to \mathcal{G}$: in formula,  $X(g)=\frac{d}{dt}\big |_{t=0}\, e^{tF(g)}g$ . As $\mu$ is invariant by left translations, we get for any function $f$ and left invariant vector field $\xi$ the equation $L_{f\xi}\mu=df\wedge i_\xi\mu+fL_\xi\mu= df\wedge i_\xi\mu$ from which we get $\diverg_{\mu}(X)=\tr(DF)$ by decomposing $F$ on a basis of $\mathcal{G}$. In this last formula, we used again the left trivialization to see $DF$ as an endomorphism of $\mathcal{G}$. 

Let us apply this fact to compute the divergence $\diverg_\mu(X_f)$: write $\gamma$ as a word $w$ in the generators $t_1,\ldots,t_g$ of $\pi_1(H)$. The vector field $X_f$ corresponds the map $F:\sldeux(\C)^g\to\liedeux(\C)^g$ given by $F(A_1,\ldots,A_g)=(w(A_1,\ldots,A_g)_0,0,\ldots,0)$. Hence $\diverg_{\mu}(X_f)$ is the divergence of the map $F_1:A_1\mapsto w(A_1,\ldots,A_g)_0$. The derivative of $F_1$ is a sum over the occurrences of $t_1$ in $w$, one for each intersection of $\gamma$ and $\gamma_i$. 

For an occurrence of the form $UA_1V$, the contribution to the derivative is $\xi\mapsto (U\xi A_1V)_0$ and its trace is $\frac{1}{2}\tr(UA_1V)+\tr(UV^{-1}A_1^{-1})$. Writing $A=UA_1$ and $B=V$, we get the same as half the contribution of $f'$. For an occurence of the form $UA_1^{-1}V$, the contribution of the derivative is $\xi\mapsto -(UA_1^{-1}\xi V)_0$ and its trace is $-\frac{1}{2}\tr(UA_1^{-1}V)-\tr(UA_1^{-1}V^{-1})$. Writing $A=UA_1^{-1}$ and $B=V$, we get opposite contribution. Observing that this last case corresponds to the one pictured in Figure \ref{handlebody}, we finally get the claimed formula.

\end{proof}

\section{Twisted self-linking}\label{selflink}

\subsection{Definition of the twisted self-linking}
Let $M$ be a connected 3-manifold and $\rho:\pi_1(M)\to \sldeux(\C)$ be a representation.  We denote by $\ad_\rho$ the adjoint representation on $\liedeux(\C)$ and by $\bil_\rho$ the space of bilinear forms on $\liedeux(\C)$ viewed as a $\pi_1(M)$-bi-module via the action 
$$(\gamma,\delta).b(\xi,\eta)=b(\rho(\gamma)^{-1}\xi\rho(\gamma),\rho(\delta)^{-1}\eta\rho(\delta)).$$
We may think of $\ad_\rho$ and $\bil_\rho$ as flat bundles over $M$ and $M\times M$ respectively. The fibres of $\ad_\rho$ should be (non-canonically) identified with $\liedeux(\C)$ whereas the holonomy of the same bundle between two points may be (non-canonically) identified with an element of $\sldeux(\C)$.

If $X$ is a topological space equipped with a flat bundle $E\to X$, whenever we have a triple $(N,f,s)$ where $N$ is a $k$-manifold with boundary, $f:N\to X$ is a continuous map and $s$ is a flat section of $f^*E$, it defines a twisted chain in $C_k(X,E)$. It is a cycle if $N$ is closed, we will use freely this way of constructing twisted cycles as in the following example.

Given a map $\gamma:S^1\to M$  we define a cycle $z_\gamma$ in $Z_1(M,\ad_\rho^*)$ as the triple $(S^1,\gamma,s)$ where $s(x)$ is the  linear form on the fiber $(\ad_\rho)_{\gamma(x)}$ which maps $\xi$ to $\tr(\xi A)$ where $A$ is the holonomy of $\ad_\rho$ along $\gamma$ starting from $\gamma(x)$. This cycle represents geometrically the first derivative of the trace function $f_\gamma$: we wrote it more concisely as $\gamma\otimes \rho(\gamma)_0$ in the proof of Proposition \ref{handle}. Notice that we identify $\liedeux(\C)$ with its dual by using the trace. 

We construct below a twisted 2-chain representing the second derivative of $f_\gamma$: as in differential geometry the Hessian is a well-defined quadratic form only if the first derivative vanishes, the 2-chain we construct will not be closed in general. 

We define $C_2 S^1$ as the product $S^1\times S^1$ blown-up along the diagonal. That is, a point $(x,y)\in C_2 S^1$ is either a pair with $x\ne y$, a pair of the form $(x_-,x_+)$ or a pair of the form $(x_+,x_-)$ where $x_-$ lies before $x_+$ on the circle. Observe that $C_2S^1$ is an annulus whose boundary is made of two copies of the diagonal $S^1\subset S^1\times S^1$. 

Let $\gamma:S^1\times [-1,1]\to M$ be an embedding. We set $\gamma^+(x)=\gamma(x,1), \gamma^-(x)=\gamma(x,-1)$ and $\gamma^0(x)=\gamma(x,0)$. Let us define the 2-chain $z^2_\gamma\in C_2(M^2\setminus\Delta,\bil_\rho)$
as the triple $(C_2S^1,\gamma^-\times\gamma^+,s)$ where for $x\ne y$, $s(x,y)$ is the following 
bilinear form on $(\ad_\rho)_{\gamma^-(x)}\times (\ad_\rho)_{\gamma^+(y)}$:

\[(\xi,\eta)\mapsto \tr(\xi A \eta B).\]
In this formula, $A$ (resp. $B$) is the holonomy of $\rho$ along $\gamma$ from $\gamma^0(x)$ to $\gamma^0(y)$ (resp. from $\gamma^0(y)$ to $\gamma^0(x)$), noticing that we identified the fibers of $\ad_\rho$ along the path $t\mapsto \gamma(x,t)$.

This cycle is not closed and we find that $\partial z^2_\gamma$ is the difference $(S^1,\beta^+,s^+)-(S^1,\beta^-,s^-)$ where $\beta^+(x)=(\gamma^+(x),\gamma^-(x))$ and $s^+(x):\xi\otimes \eta\mapsto \tr(\xi\eta A_x)$ while $\beta^-(x)=(\gamma^-(x),\gamma^+(x))$ and $s^-(x):\xi\otimes \eta\mapsto \tr(\eta\xi A_x)$.

Let $U$ be a tubular neighborhood of the diagonal $\Delta$ in $M^2$ such that for all $x\in S^1$ and $s,t\in [-1,1]$, $(\gamma(x,s),\gamma(x,t))\in U$. The 
two maps $(\gamma^+,\gamma^-)$ and $(\gamma^-,\gamma^+)$ are homotopic in $U$ so that $\partial z^2_\gamma$ is homologous to the cycle $(S^1,\gamma^-\times\gamma^+,\xi\otimes \eta\mapsto \tr([\xi,\eta]A_x)$ in the same notation as before.

The above bilinear form is antisymmetric: we denote by $\Lambda^2\ad_\rho^*$ this coefficient system over $U$. We summarize the preceding discussion by writing $\partial z^2_\gamma\in Z_1(U\setminus \Delta,\Lambda^2 \ad_\rho^*)$. In order to make $z^2_\gamma$ into a cycle, we form the following complex:
\[\tilde{C}_*(M,\rho)=C_*(M^2\setminus \Delta,\bil_\rho)/C_*(U\setminus\Delta,\Lambda^2\ad_\rho^*).\]

We will denote by $\mathcal{H}(M,\rho)$ the space $H_2(\tilde{C}_*(M,\rho))$. The 2-cycle $z^2_\gamma\in \mathcal{H}(M,\rho)$ will be interpreted at the same time as the second derivative of $f_\gamma$ at $\rho$ and the self-linking of $\gamma$ with coefficients in $\ad_\rho$. 

\subsection{Geometric interpretations of the self-linking}
The self-linking interpretation should be clear from the following remark: if $L_1$ and $L_2$ are two disjoint knots in $\R^3$, the product $L_1\times L_2$ makes sense as a 2-cycle in $H_2((\R^3)^2\setminus \Delta,\Z)$. Let $u$ be the 2-cycle given by $\{0\}\times S^2$: it is a generator of $H_2((\R^3)^2\setminus \Delta,\Z)$ and one has $L_1\times L_2=\lk(L_1,L_2)u$. This comes from the interpretation of the Gauss formula as the degree of the map from $L_1\times L_2$ to $S^2$ mapping $(x_1,x_2)$ to $\frac{x_2-x_1}{|x_2-x_1|}$. If $L_1$ and $L_2$ are the two boundary components of a banded knot $L$, we get the self-linking number of $L$. The space $\mathcal{H}(M,\rho)$ is reminiscent of this construction if we replace $\R^3$ with a 3-manifold $M$ and twist the coefficients by $\rho$. 

Let us interpret $z^2_\gamma$ as the second derivative of $f_\gamma$: suppose that $\rho:\pi_1(M)\to\liedeux(\C)$ is obtained as the holonomy of a 1-form $a\in\Omega^1(M,\liedeux(\C))$. Explicitly, the holonomy of $a$ along a path $\gamma:[0,1]\to M$ may be computed with the following formula introduced by Picard in 1890 as a method for approximating the solutions of a linear differential equation with non constant coefficients: 
\[ \tr \hol_\gamma a=\sum_{n\ge 0}\int_{0<t_1<\cdots <t_n<1}\tr(\gamma^*a(t_1)\cdots\gamma^*a(t_n)).\]

This gives $D_a \tr \hol_\gamma (b)=\int_\gamma \tr b \hol_\gamma(a)=\langle z_\gamma,b\rangle$. In the same way we have

\[\frac{1}{2}D^2_a \tr \hol_\gamma(b_1,b_2)=\int_{S^1\times S^1}\tr(b_1\hol_{x\to y} (\gamma^*a) b_2\hol_{y\to x}(\gamma^*a))=\langle z^2_\gamma, b_1\times b_2\rangle.\]
This formula was indeed the starting point for defining the cycle $z^2_\gamma$. The fact that this second derivative should play a role in the derived skein module comes from the path integral as explained in Subsection \ref{witten}.

\subsection{Relation with the derived skein module}
Given an embedding $i:D^3\to M$ and $\phi$ any bilinear map on the fiber of $(\ad_\rho)_{i(0)}$ - that we can view as a flat section of $i^*\bil_\rho$ - we set $u\otimes \phi=(S^2,i|_{S^2},\phi)\in \mathcal{H}(M,\rho)$. This class only depends on $\phi$. 
We will need the following properties of elements in $\mathcal{H}(M,\rho)$. 

\begin{proposition}\label{derivproduit}
\begin{enumerate}
\item
Let $F:S^1\times[-1,1]\times[0,1]\to M$ be a 1-parameter family of embeddings where we write $F_s(x,t)=F(x,t,s)$.

The map $F$ can be viewed as a chain in $C_3(M^2,\bil_\rho)/C_*(U\setminus\Delta,\Lambda^2\ad_\rho^*)$ such that $\partial F = z^2_{F_1}-z^2_{F_0}$. It will be chain in $\tilde{C}_3(M,\rho)$ if moreover one has $F(x,1,s)\ne F(y,-1,t)$ for all $x,y\in S^1$ and $s,t\in [-1,1]$.  
\item
Let $\alpha,\beta:S^1\times[0,1]\to M$ be two embeddings satisfying $\alpha(x,t)=\beta(y,s)\iff x=y=1$ and $s=t$. Then one can define the composition $\alpha\beta:S^1\times[0,1]\to M$ and the following identity holds in $\mathcal{H}(M,\rho)$: 

\[z^2_{\alpha\beta}+z^2_{\alpha\beta^{-1}}=\tr \rho(\beta) z^2_\alpha+z_\alpha^-\times z_\beta^+ +z_\beta^-\times z_\alpha^++\tr \rho(\alpha)z^2_\beta.\]

\item
Observe that there is a unique $\sldeux(\C)$-equivariant projection $\pi$ from $\bil_\rho$ to the space of symmetric invariant bilinear forms (or Killing forms). If $\rho$ is Zariski-dense, then $u\otimes \phi=u\otimes \pi(\phi)$ in $\mathcal{H}(M,\rho)$. 
\end{enumerate}
\end{proposition}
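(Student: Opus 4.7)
The plan is to treat the three parts separately. Part (1) is essentially definitional, Part (3) follows from Part (1) combined with a Zariski-density argument, and Part (2) carries the real combinatorial weight.

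For Part (1), equip the product $C_2 S^1 \times [0,1]$ with the map $(x,y,s)\mapsto (F^-_s(x),F^+_s(y))$ and the flat section given by the same holonomy formula $\xi\otimes\eta\mapsto \tr(\xi A_s \eta B_s)$ as in the definition of $z^2_{F_s}$. The hypothesis $F(x,1,s)\ne F(y,-1,t)$ is exactly what guarantees this 3-chain avoids the diagonal $\Delta\subset M^2$, so that it lies in $C_3(M^2\setminus\Delta,\bil_\rho)$; without this hypothesis the same chain only makes sense in the quotient $C_3(M^2,\bil_\rho)/C_*(U\setminus\Delta,\Lambda^2\ad^*_\rho)$. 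The boundary of the cube $C_2 S^1\times [0,1]$ decomposes as the two end faces $C_2 S^1\times\{0,1\}$, which contribute $\pm z^2_{F_s}$, and the two side faces coming from $\partial C_2 S^1\times [0,1]$, which are two copies of the diagonal in the blow-up. The same coefficient computation recorded just before the definition of $\tilde C_*(M,\rho)$ shows that on the side faces the bilinear form is antisymmetric in $\xi,\eta$, so these contributions lie in $C_*(U\setminus\Delta,\Lambda^2\ad^*_\rho)$ and vanish in the quotient; hence $\partial F = z^2_{F_1}-z^2_{F_0}$ in $\tilde C_*(M,\rho)$.

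For Part (2), the key algebraic input is the $\sldeux(\C)$ trace identity $\tr(AB)+\tr(AB^{-1})=\tr(A)\tr(B)$, of which the stated equality is the chain-level shadow — one can motivate it by taking two derivatives in $\rho$ of the identity $\tr\rho(\alpha\beta)+\tr\rho(\alpha\beta^{-1}) = \tr\rho(\alpha)\tr\rho(\beta)$. Parametrize the loops so that the first half of $S^1$ maps to $\alpha$ and the second half to $\beta$ respectively $\beta^{-1}$; this induces a decomposition of $C_2 S^1$ into the four regions $I_\alpha^2$, $I_\alpha\times I_\beta$, $I_\beta\times I_\alpha$, $I_\beta^2$, and each of $z^2_{\alpha\beta}$, $z^2_{\alpha\beta^{-1}}$ splits accordingly. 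On the pure piece $I_\alpha^2$, the only difference between the two coefficients is that a single factor $\rho(\beta)$ on the ``long way'' around the composite loop is replaced by $\rho(\beta)^{-1}$; the trace identity then produces exactly $\tr\rho(\beta)$ times the coefficient of $z^2_\alpha$, with the symmetric statement on $I_\beta^2$ giving $\tr\rho(\alpha)\,z^2_\beta$. The mixed pieces lie automatically in $M^2\setminus\Delta$ because $\alpha$ and $\beta$ are disjoint off the basepoint; invoking the same trace identity a second time reorganises the two corresponding summands from $\alpha\beta$ and $\alpha\beta^{-1}$ into the product cycles $z_\alpha^-\times z_\beta^+$ and $z_\beta^-\times z_\alpha^+$. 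The residual boundary contributions at the interfaces between the four regions, above the splitting points of $S^1$, are again antisymmetric in $\xi,\eta$ and sit near the diagonal, so they lie in $C_*(U\setminus\Delta,\Lambda^2\ad^*_\rho)$ and vanish in $\mathcal{H}(M,\rho)$.

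For Part (3), first observe that if $\phi$ is antisymmetric then $u\otimes\phi$ lies in $C_2(U\setminus\Delta,\Lambda^2\ad^*_\rho)$ for any tubular neighbourhood $U$ of $\Delta$ containing $\{i(0)\}\times i(S^2)$, hence vanishes in $\mathcal{H}(M,\rho)$; thus $u\otimes\phi = u\otimes\phi_{\mathrm{sym}}$. To reduce the symmetric part further, for any $\gamma\in\pi_1(M,i(0))$ apply the construction of Part (1), in the analogous setting with $S^2$ in place of $C_2 S^1$, to an isotopy of $i$ dragging $i(0)$ once around $\gamma$: the resulting 3-chain has boundary $u\otimes\phi - u\otimes(\rho(\gamma)\cdot\phi)$, where the action is the diagonal one on $\bil_\rho$. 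Hence $u\otimes\phi$ depends only on the image of $\phi$ in the coinvariants of $\bil_\rho$ under $\rho(\pi_1(M))$; by Zariski density these coincide with the diagonal $\sldeux(\C)$-coinvariants on $\bil(\liedeux(\C))$, and by reductivity of $\sldeux(\C)$ the coinvariants equal the invariants. The decomposition $\liedeux\otimes\liedeux = \mathrm{Sym}^2\liedeux\oplus\Lambda^2\liedeux$ with $\Lambda^2\liedeux\cong\liedeux$ shows that the invariants on $\bil(\liedeux)$ are spanned by the Killing form, which is exactly the image of $\pi$; combining with the first reduction yields $u\otimes\phi = u\otimes\pi(\phi)$.

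The hard part is Part (2): spelling out all coefficients in a common system of variables $\alpha_1,\alpha_2,\beta_1,\beta_2$ around a point $(x,y)$, invoking the trace identity twice — once on the pure pieces, once on the mixed ones — and then verifying that the residual boundary contributions where the four regions meet really do cancel modulo the antisymmetric $U$-subcomplex. Parts (1) and (3) are largely formal once Part (1) is in place.
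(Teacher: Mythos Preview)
Your arguments for Parts (1) and (2) follow the same strategy as the paper: Part (1) is the evident parametrized version of the construction of $z^2_\gamma$, and Part (2) proceeds by cutting $S^1$ at the two junction points and checking the four pieces of $C_2S^1$ separately via the trace identity $\tr(AB)+\tr(AB^{-1})=\tr(A)\tr(B)$. One remark: the paper carries out Part (2) as a genuine chain-level identity, showing that on each piece the bilinear forms literally coincide (using the involution $A\mapsto A^*$ and $B+B^*=\tr(B)\id$). So there are in fact no ``residual boundary contributions at the interfaces'' to worry about; your concern there is unnecessary, and the claim that such contributions would be antisymmetric is not obviously true and not needed.

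Your Part (3) is correct but takes a different route from the paper. The paper argues homologically: it uses the long exact sequence of the pair $(M^2,M^2\setminus\Delta)$, excision, and K\"unneth to identify the relevant piece of $H_2(M^2\setminus\Delta,\bil_\rho)$ with $H_0(M,\ad_\rho^*\otimes\ad_\rho^*)$, and then the Clebsch--Gordan decomposition $\ad_\rho^*\otimes\ad_\rho^*\simeq\C\oplus S^2V_\rho\oplus S^4V_\rho$ together with Zariski density forces this to be one-dimensional. Your argument is more hands-on: antisymmetric $\phi$ die immediately in the quotient defining $\tilde C_*$, and an explicit isotopy dragging the ball around a loop $\gamma$ exhibits a $3$-chain with boundary $u\otimes\phi-u\otimes(\rho(\gamma)\cdot\phi)$, reducing to coinvariants. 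Your approach is more elementary and self-contained; the paper's has the advantage that the exact-sequence machinery it sets up is reused verbatim in the later analysis of $\mathcal{H}(M,\rho)$, so it is not extra work in context. Note also that your first reduction (killing the antisymmetric part) is actually redundant: the monodromy-plus-coinvariants argument already handles all of $\bil_\rho$ at once, since $\Lambda^2\liedeux\cong\liedeux$ has no invariants.
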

\begin{proof}
The first point is clear as one can repeat the construction of the the chain $z^2_\gamma$ by adding the parameter $s$. The condition of non-intersection ensures that this cycle takes values in $M^2\setminus\Delta$, proving the first part. 

The second formula is formally the second derivative of the trace identity $f_{\alpha\beta}+f_{\alpha\beta^{-1}}=f_\alpha f_\beta$. 
We first observe that the composition $\alpha\beta$ is no longer an embedding. It does not prevent from defining $z^2_{\alpha\beta}$ as one still has $\alpha\beta(x,1)\ne\alpha\beta(y,-1)$ for all $x,y\in S^1$. The second observation is that if we reverse the orientation of a component, then the cycle get multiplied by $-1$. As a convention, we will associate an element $\xi\in (\ad_\rho)_{\gamma(t)}$
with a local orientation of $\gamma$ at $t$. Reversing this orientation amounts in changing $\xi$ with $-\xi$. This will allow us to glue knots with incompatible orientations. 

Cut $S^1$ into the two halves $S^1_{\pm}=\{z\in S^1, \pm \operatorname{Im}(z)\ge 0\}$: then we have the decomposition $C_2S^1=C_2S^1_+\cup C_2S^1_-\cup S^1_+\times S^1_-\cup S^1_-\times S^1_+$ to which correspond a linear decomposition of $z_{\alpha\beta}$ and $z_{\alpha\beta^{-1}}$. We check the identity of the lemma by looking at the restriction to each piece. 

 \begin{figure}[htbp]
\centering
  \def\svgwidth{13cm}
 \executeiffilenewer{leibnitz.svg}{leibnitz.pdf}%
 {inkscape -z -D --file=leibnitz.svg %
 --export-pdf=leibnitz.pdf --export-latex}%
 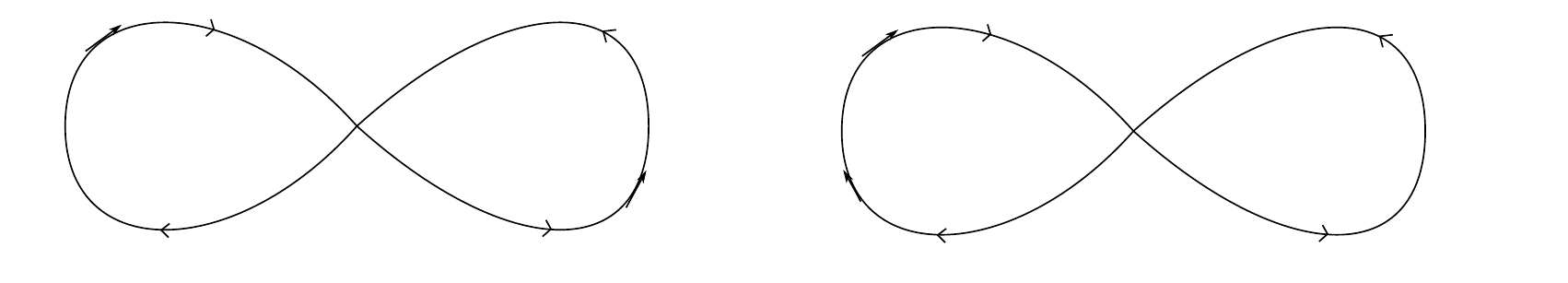%

  \caption{Composition formula}
  \label{leibnitz} 
\end{figure}
With the notation of the left part of Figure \ref{leibnitz}, if $x\in S^1_+$ and $y\in S^1_-$, the bilinear form associated to $z^2_{\alpha\beta}$ is $\tr( B_2\eta B_1A_2\xi A_1)$ while the one associated to $z^2_{\alpha\beta^{-1}}$ gives $-\tr(B_1^{-1}\eta B_2^{-1}A_2\xi A_1)$. Introduce the notation $\begin{pmatrix} a & b\\c & d \end{pmatrix}^*=\begin{pmatrix} d & -b \\ -c & a\end{pmatrix}$ so that we have $\xi^*=-\xi$ for $\xi\in \liedeux(\C)$ and $A^*=A^{-1}$ for $A\in\sldeux(\C)$. The contribution of $z^2_{\alpha\beta}+z^2_{\alpha\beta^{-1}}$ is 
$\tr(BA)+\tr(B^*A)$ where $A=A_2\xi A_1$ and $B=B_2\eta B_1$. From the identity $B+B^*=\tr(B)\id$, we get $\tr(A)\tr(B)$ and recognize the bilinear form associated to the product $z_\alpha^-\times z_\beta^+$. We obtain in the same way the term $z_\beta^-\times z_\alpha^+$. 

If $x,y\in S^1_+$ are as in the right side of Figure \ref{leibnitz}, the contribution of $z^2_{\alpha\beta}+z^2_{\alpha\beta^{-1}}$ is $\tr(BA_3\eta A_2\xi A_1)+\tr(B^{-1}A_3\eta A_2\xi A_1)=\tr(B)\tr(A_3\eta A_2\xi A_1)$. This corresponds to $\tr \rho(\beta) z^2_\alpha$ as expected, the last term is treated in the same way.

For the third point, we observe that it is sufficient to show that if $\pi(\phi)=0$ in $H_2(M^2\setminus \Delta,\bil_\rho)$ then $u\otimes \phi=0$. Replace for this proof $M$ by its interior: the exact sequence of the pair $(M^2,M^2\setminus\Delta)$ and the excision property give 
\[H_3(U,\partial U;\bil_\rho)\to H_2(M^2\setminus\Delta,\bil_\rho)\to H_2(M^2,\bil_\rho)\to H_2(U,\partial U,\bil_\rho).\] 
As $M$ has a trivial tangent bundle, $U$ is diffeomorphic to $M\times D^3$ and hence by K\"unneth formula : $H_*(U,\partial U,\bil_\rho)=H_*(M,\bil_\rho)\otimes H_*(D^3,\partial D^3)$. 
This implies that $H_2(U,\partial U,\bil_\rho)=0$ and $H_3(U,\partial U,\bil_\rho)=H_0(M,\ad_\rho^*\otimes\ad_\rho^*)$. Using Clebsch-Gordan rule, we have the following decomposition $\ad_\rho^*\otimes \ad_\rho^*\simeq\C\oplus S^2V_\rho\oplus S^4V_\rho$ where $V_\rho$ denotes the space $\C^2$ viewed as a $\pi_1(M)$-module. We observe that $\pi$ corresponds to the projection on the first factor. By Zariski density, all representations $S^n V_\rho$ are irreducible, hence we get $H_0(M,\ad_\rho^*\otimes\ad_\rho^*)=H_0(M,\C)=\C$. By assumption, $\pi(\phi)=0$ and hence its projection on $H_0(M,\C)$ vanishes, which shows the lemma.

\end{proof}

%
%
We will normalize the Killing form by the formula $K(\xi,\eta)=\frac{1}{6}\tr(\xi\eta)$. The projection $\pi$ can be computed by the formula $\pi(\phi)=Q(\phi)K$ where $Q(\phi)=\phi(H,H)+2\phi(E,F)+2\phi(F,E)$ in the standard basis $H,E,F$ of $\liedeux(\C)$.  

We set $\mathcal{H}_\epsilon(M,\rho)=\C\oplus \epsilon\mathcal{H}(M,\rho)$ and endow it with a $\C[\epsilon]/(\epsilon^2)$-module structure by setting $\epsilon 1=u\otimes K$. 
\begin{proposition}
 Let $M$ be a 3-manifold and $\rho:\Gamma\to \sldeux(\C)$ be a Zariski-dense representation. The following map $\Phi:S'(M)\to \mathcal{H}_\epsilon(M,\rho)$ is well-defined:
$$\Phi([L])=\chi_\rho(L)+\epsilon (-1)^{\#L}z(L)$$
where $L=\coprod_{i=1}^n L_i$ and 
$$z(L)=\sum_i z^2(L_i) \prod_{j\ne i} \tr \rho(L_j)+\sum_{i\ne j} z^-(L_i)\times z^+(L_j)\prod_{k\notin\{i,j\}}\tr \rho(L_k)$$
\end{proposition}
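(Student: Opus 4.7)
The plan is to verify that $\Phi$, initially defined on the free $R$-module generated by banded links, descends to $\mathcal{S}'(M)$. This amounts to checking isotopy invariance and vanishing on the Kauffman relators, which with $t = -1 + \epsilon$ and $\epsilon^2 = 0$ read $[L \cup O] + 2[L]$ and $[L_\times] + [L_\infty] + [L_0] - \epsilon([L_\infty] - [L_0])$. Isotopy invariance of $z(L)$ is inherited from Proposition \ref{derivproduit}(1) applied componentwise and to the cross products. The trivial-loop relation is a direct calculation: since $\rho(O) = \id$, one has $\tr \rho(O) = 2$ and $\rho(O)_0 = 0$, so both $z^2(O) = 0$ and $z^\pm(O) = 0$. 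Every term of $z(L \cup O)$ involving the trivial component then vanishes, while the remaining terms each pick up a factor $\tr \rho(O) = 2$; together with $(-1)^{\#(L \cup O)} = -(-1)^{\#L}$ and $\chi_\rho(L \cup O) = -2\chi_\rho(L)$, this yields the expected $\Phi([L \cup O]) = -2 \Phi([L])$.

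The crossing relation is the substantive part. Separating constant and $\epsilon$-coefficients yields the two identities
\[\chi_\rho(L_\times) + \chi_\rho(L_\infty) + \chi_\rho(L_0) = 0 \quad\text{and}\quad \sum_{L' \in \{L_\times, L_\infty, L_0\}}(-1)^{\#L'} z(L') = \bigl(\chi_\rho(L_\infty) - \chi_\rho(L_0)\bigr)\, u \otimes K.\]
The first is the classical trace identity $\tr A \tr B = \tr(AB) + \tr(AB^{-1})$, with $A = \rho(\alpha)$, $B = \rho(\beta)$ the holonomies of the two arcs meeting at the crossing, multiplied by $\prod_{j \ge 3}(-\tr\rho(L_j))$. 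For the second, since the three links agree outside a small ball $B$ around the crossing, I would expand each $z(L')$, note that every contribution involving components $L_j$ disjoint from $B$ cancels by the same trace identity, and then use Proposition \ref{derivproduit}(2) in the form
\[z^2(\alpha\beta) + z^2(\alpha\beta^{-1}) = \tr\rho(\beta)\, z^2(\alpha) + z^-_\alpha \times z^+_\beta + z^-_\beta \times z^+_\alpha + \tr\rho(\alpha)\, z^2(\beta),\]
together with its analogue for the cross products, to show that the surviving combination is a local $2$-cycle $u \otimes \phi$ supported in $B$, where $\phi$ is a bilinear form on $\liedeux(\C)$ extracted from the holonomies at the crossing. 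By Proposition \ref{derivproduit}(3), which requires the Zariski density of $\rho$, this equals $Q(\phi)\, u \otimes K$.

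The analysis splits into two cases: either $\alpha$ and $\beta$ belong to distinct components of $L_\times$, in which case both smoothings merge them so that $\#L_\infty = \#L_0 = \#L_\times - 1$, or they are two arcs of the same component, in which case one smoothing splits the component while the other preserves the component count. The signs $(-1)^{\#L'}$ rearrange themselves differently in the two cases, but the local identification with the right-hand side has the same shape.

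The main obstacle is the concluding local matching: verifying that $Q(\phi)$ equals $\tr(AB) - \tr(AB^{-1})$ with the correct sign and with the right identification of orientations in each of the two cases. This is a matrix trace computation on $\liedeux(\C)$ of the same flavour as the identities $\xi^* = -\xi$ and $B + B^* = \tr(B)\id$ already used in the proof of Proposition \ref{derivproduit}(2); the delicate part is tracking which corners of the crossing contribute which bilinear form, and how the formula of Proposition \ref{derivproduit}(2) is applied in Case B, where $\alpha$ and $\beta$ are sub-arcs of a single embedded circle rather than distinct components.
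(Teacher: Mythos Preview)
Your overall structure matches the paper's: reduction to isotopy invariance, the trivial-loop relation, the crossing identity, a case split according to connectivity at the crossing, and a final appeal to Proposition \ref{derivproduit}(3). The gap is in how the local class $u\otimes\phi$ is produced. You write that Proposition \ref{derivproduit}(2), together with its cross-product analogue, shows that the surviving combination is $u\otimes\phi$. But Proposition \ref{derivproduit}(2) is an \emph{exact} identity in $\mathcal{H}(M,\rho)$: applied to $\alpha,\beta$ it leaves no residual class. Moreover, the links $L_\times,L_0,L_\infty$ as they stand are \emph{not} in the configuration of that proposition --- they differ by a genuine crossing inside the ball $B$, not by the composition-at-a-point picture that Proposition \ref{derivproduit}(2) addresses.

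The paper's mechanism for producing $u\otimes\phi$ is different: one chooses explicit one-parameter deformations $F^\times,F^0,F^\infty$ from $L_\times,L_0,L_\infty$ to new links $L'_\times,L'_0,L'_\infty$ which \emph{are} literally in the configuration of Proposition \ref{derivproduit}(2), so that $\sum(-1)^{\#L'}z(L')$ vanishes after deformation. The deformations $F^0$ and $F^\infty$ satisfy the non-intersection hypothesis of Proposition \ref{derivproduit}(1), so they contribute nothing. The deformation $F^\times$, however, necessarily passes the two strands through one another at a single parameter value; excising a small ball around that instant exhibits $z^2(L_\times)-z^2(L'_\times)=u\otimes\phi$ with $\phi(\xi,\eta)=\tr(B\eta A\xi)$ in the first case and $\phi(\xi,\eta)=\tr(A\xi)\tr(B^{-1}\eta)$ in the second. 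Only then does one compute $Q(\phi)$ and match it with $\chi_\rho(L_0)-\chi_\rho(L_\infty)$. In short, the correction term comes from a wall-crossing in Proposition \ref{derivproduit}(1), not from Proposition \ref{derivproduit}(2); your sketch is missing this cobordism step and with it the explicit identification of $\phi$.
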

\begin{proof}
We extend $\Phi$ to all elements in $\mathcal{S}'(M)$ by $\C[\epsilon]/(\epsilon^2)$-linearity.
Let $L$ be a banded link and $U$ be an unknot disjoint from $L$. The Kauffman relation implies $\Phi([L\cup U])=-2\Phi([L])$ as $-t^2-t^{-2}=-2 \mod \epsilon^2$ hence we need to show the identity $z(L\cup U)=2z(L)$. This comes directly from the fact that $\tr \rho(U)=2$ and $z^2_U=z^\pm_U=0$ in $\mathcal{H}(M,\rho)$.

Let $L_\times, L_0,L_\infty$ be three links which form a Kauffman relation $[L_\times]=t[L_0]+t^{-1}[L_\infty]$. As in Equation \eqref{phi}, we have to show the relation: 
\[(-1)^{\# L_\times}z(L_\times)+(-1)^{\#L_0}z(L_0)+(-1)^{\#L_\infty}z(L_\infty)=\chi_\rho(L_0)-\chi_\rho(L_{\infty}).\]

We have two cases to consider: in the first case, the strands are connected as in the picture which represents simultaneously the three links $L_\times,L_0$ and $L_\infty$. 
\begin{figure}[htbp]
\begin{center}
  \def\svgwidth{12cm}
 \executeiffilenewer{K3.svg}{K3.pdf}%
 {inkscape -z -D --file=K3.svg %
 --export-pdf=K3.pdf --export-latex}%
 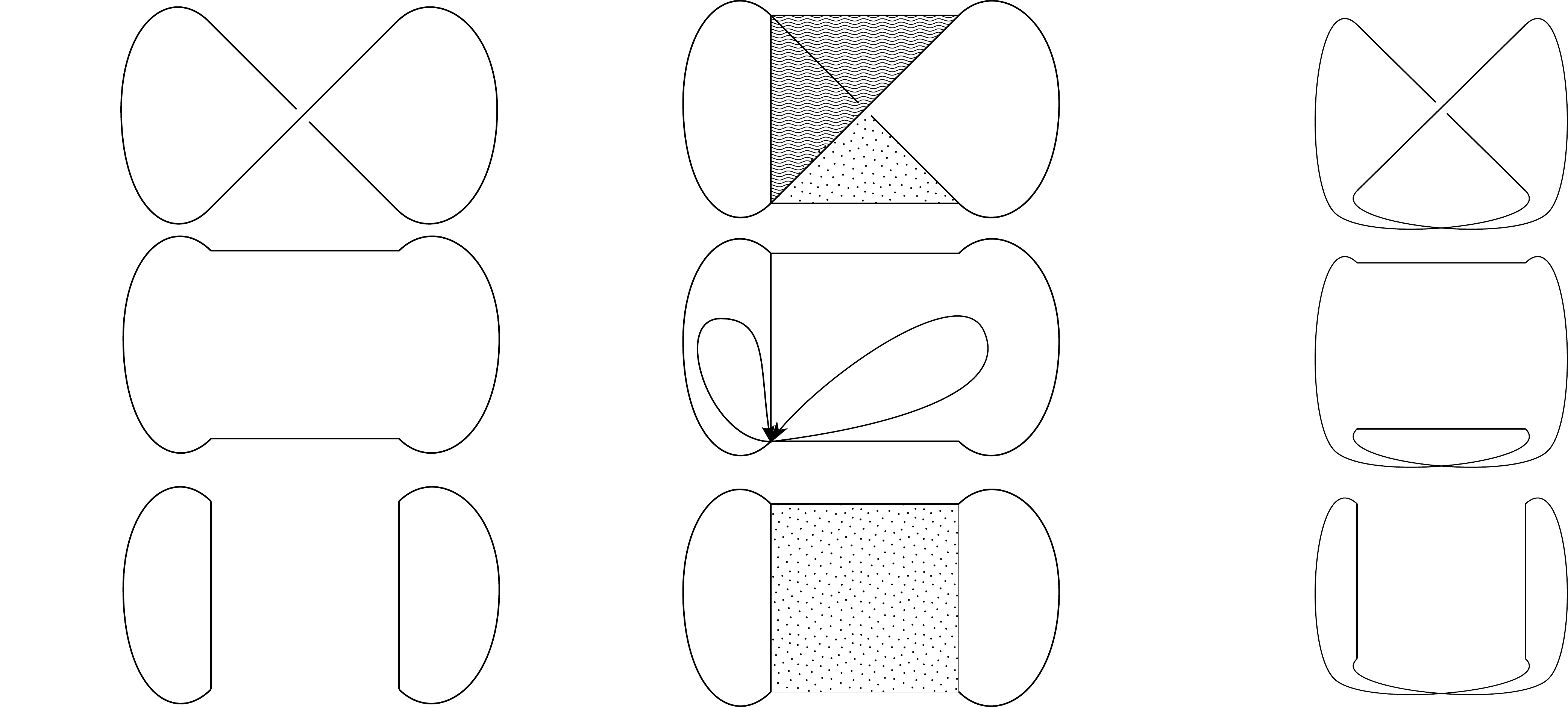%

\caption{Cobordisms for Kauffman relations}\label{cobo}
\end{center}
\end{figure}

Set $X=(-1)^{\# L_\times}z(L_\times)+(-1)^{\#L_0}z(L_0)+(-1)^{\#L_\infty}z(L_\infty)$ and let $F^\times,F^0$ and $F^\infty$ be the deformations of $L_\times,L_0$ and $L_\infty$ shown in Figure \ref{cobo}. As stated in the first point of Proposition \ref{derivproduit}, to each deformation $F$ from $L$ to $L'$ correspond a 3-chain denoted by $z^2(F)$ so that $\partial z^2(F)=z^2(L')-z^2(L)$. The new links $L_\times',L_0'$ and $L_\infty'$ are just as in the second point of Proposition \ref{derivproduit}. Hence, checking the signs, we get $(-1)^{\# L_\times}z(L_\times')+(-1)^{\#L_0}z(L_0')+(-1)^{\#L_\infty}z(L_\infty')=0$ so that we have $-X=\partial z^2(F_\times)+\partial z^2(F_0)+\partial z^2(F_\infty)$. If the deformation $F$ satisfied the equation $F(x,1,s)\ne F(y,1,t)$ for all $x,y\in S^1$ and $t,s\in [0,1]$, then $z_2(F)$ would be a chain in $C_3(M^2\setminus\Delta,\bil_\rho)$ and hence we would have $z^2(L)=z^2(L')$ in $\mathcal{H}(M,\rho)$. This is the case of $F_0$ and $F_\infty$. We observe that for $F_\times$, this condition fails for one precise set of parameters. By cutting out a small ball around that point, we see that $z^2(L_\times)-z^2(L'_\times)=u\otimes \phi$ where $\phi(\xi,\eta)=\tr(B\eta A\xi)$ with $A=\rho(\alpha)$ and $B=\rho(\beta)$. Hence $X=Q(\phi)=\tr(A)\tr(B)+\tr(AB^{-1})=\chi_\rho(L_0)-\chi_\rho(L_\infty)$. 

In the second case, the links are presented on the right hand side of the figure as $L''_\times,L''_\infty$ and $L''_0$. We need to show the identity
$z(L''_0)+z(L''_\infty)-z(L''_\times)-\chi_\rho(L''_0)+\chi_\rho(L''_\infty)=0$. Using the same surfaces and the same argument, we find that $z(L''_0)+z(L''_\infty)-z(L''_\times)= -Q(\phi)
$ where $\phi(\xi,\eta)=\tr(A\xi)\tr(B^{-1}\eta)$. A computation gives $Q(\phi)=\tr(AB^{-1})-\tr(AB)=-\chi_\rho(L''_\infty)+\chi_\rho(L''_0)$. This ends the proof of the proposition.
\end{proof}

\subsection{Properties of the space $\mathcal{H}(M,\rho)$}\label{properties}

Let $M$ be any 3-manifold with $\partial M=\coprod_i \Sigma_i$. We suppose that no component $\Sigma_i$ is a sphere and set $d=\sum_i\max(1,3 g(\Sigma_i)-3)$. This number is the minimal dimension of the character variety $X(M)$ (and the most regular case).
Let $\rho:\pi_1(M)\to \sldeux(\C)$ such that 
\begin{enumerate}
\item $\rho$ has Zariski-dense image.
\item $H^1(M,\ad_\rho)$ has dimension $d$. 
\item the restriction of $\rho$ to toric boundary components is diagonalizable and non-central.
\end{enumerate}

Observe that the $\pi_1(M)$-module $\Lambda^2\ad_\rho^*$ is isomorphic to $\ad_\rho$ via the map $X\mapsto ( (\xi,\eta)\mapsto \tr([\xi,\eta]X))$. 
Hence, we have the following isomorphisms - using again the fact that the tangent bundle of a 3-manifold is always trivializable. 
\[H_*(U\setminus\Delta,\Lambda^2\ad_\rho^*)\!=\!H_*(U\setminus \Delta,\ad_\rho)\!=\!H_*(M\times S^2,\ad_\rho)\!=\!H_*(M,\ad_\rho)\otimes H_*(S^2).\]

By excision, we have $H_*(M^2,M^2\setminus\Delta,\bil_\rho)=H_{*-3}(M,\ad_\rho^*\otimes \ad_\rho^*)$ and the exact sequence of the pair $(M^2,M^2\setminus\Delta)$ gives the following sequence where the space $\C$ is generated by $u\otimes K$.  
\[H_3(M^2,\bil_\rho)\overset{I}{\to}\C\to H_2(M^2\setminus\Delta,\bil_\rho)\to H_2(M^2,\bil_\rho)\to 0.\]
 The map $I$ has a very natural interpretation: by K\"unneth formula and identifying $\ad_\rho$ with $\ad_\rho^*$ using the trace, we have $H_3(M^2,\bil_\rho)=H_1(M,\ad_\rho)\otimes H_2(M,\ad_\rho)$ $\oplus H_2(M,\ad_\rho)\otimes H_1(M,\ad_\rho)$ and the map $I$ is simply the intersection followed by the trace. 
With the assumption on the dimension of $H^1(M,\ad_\rho)$, we know from the exact sequence of the pair $(M,\partial M)$ that the map $H_1(\partial M,\ad_\rho)\to H_1(M,\ad_\rho)$ is surjective (see for instance \cite{numazu} Proposition 4.4), hence the intersection form $I$ vanishes.

The construction of $\tilde{C}_*(M,\rho)$ as a quotient of two complexes and the previous computations give the following long exact sequence:

$$\xymatrix{
H_2(M,\ad_\rho)\ar[r]^\alpha& H_2(M^2\setminus\Delta,\bil_\rho)\ar[r]& \mathcal{H}(M,\rho)\to H_1(M,\ad_\rho)\to 0}$$
Let us prove that the map $\alpha$ vanishes under our hypothesis. As above, from the exact sequence of the pair $(M,\partial M)$ we get that the map $H_2(\partial M,\ad_\rho)\to H_2(M,\ad_\rho)$ is an isomorphism. In particular each toric component of $\partial M$ gives a generator of $H_2(M,\ad_\rho)$. 
Let $T$ be a torus component of $\partial M$ and $T'$ a parallel copy of $T$ pushed into $M$. Our assumptions on the restriction of $\rho$ to $T$ imply $H_0(T,\ad_\rho)\simeq H_2(T,\ad_\rho)=\C$ and $H_1(T,\ad_\rho)$ has dimension 2. We denote by $H$ the usual diagonal matrix with entries $1$ and $-1$.

The generator $[T]\otimes H$ of $H_2(M,\ad_\rho)$ is mapped by $\alpha$ to the cycle $(T,i,s)$ where $i(x)=(x,x')$ with $x\in T$ and $x'$ parallel to $x$ in $T'$ and $s(x):\xi\otimes \eta\mapsto \tr([\xi,\eta]H)=\tr(\xi [H,\eta])$. In K\"unneth isomorphism, this cycle is written $\sum_k \sum_i a^k_i\otimes \ad_H(a^k_i)^\#$ where $(a^k_i)_i$ is a basis of $H_k(T,\ad_\rho)$ and $(a^k_i)^\#$ is its Poincar\'e dual basis. But all basis vectors in $H_*(T,\ad_\rho)$ can be chosen proportional to $H$, and hence will be killed by $\ad_H$, this proves that $\alpha$ vanishes. We observe that the map $\alpha$ is dual to the cup-bracket operation which vanishes in our context.

\begin{remark}
From these considerations, the space $\mathcal{H}(M,\rho)$ looks like the target space for a 2-jet over the character variety. This is expected from the interpretation of $z^2_\gamma$ as a second derivative, however, we could not give a precise meaning to this observation. 
\end{remark}

\subsection{Applications}
In the case when $M$ is closed the situation is much simpler. Suppose that $\rho:\pi_1(M)\to \sldeux(\C)$ is a Zariski-dense representation satisfying $H^1(M,\ad_\rho)=0$, then $u\otimes K$ is a generator of $\mathcal{H}(M,\rho)$ and hence one can define a $\C[\epsilon]/(\epsilon^2)$-linear map $\mathcal{S}'(M)\to \C[\epsilon]/(\epsilon^2)$ extending the character $\chi_\rho$. The existence of this maps proves the regularity of the skein module $\mathcal{S}'(M)$ (at least locally at $\rho$) but more importantly, it gives the geometric interpretation of the first-order term in Witten's expansion formula as explained in Subsection \ref{witten}.

Now, let $M$ be a 3-manifold with non-empty boundary and let $f+\epsilon f'$ be an element of $\mathcal{P}'$. This implies that for any $\rho:\pi_1(M)\to \sldeux(\C)$ satisfying the assumptions of Proposition \ref{properties}, $f'+\Phi(f)=0$ in $\mathcal{H}(M,\rho)$. The image of $\Phi(f)$ in $H_1(M,\ad_\rho)$ vanishes because it corresponds to $df$ and $f$ belongs to $\mathcal{P}$ and hence vanishes on $X(M)$. We would answer Question \ref{conjtransport} if we were able to show the formula $\Phi(f)=\frac{1}{2}\diverg_T(X_f)$.

\end{document}